\newtheorem*{thm}{Theorem}
\newtheorem*{proposition}{Proposition}
\newtheorem*{corollary}{Corollary}
\newtheorem{lemma}{Lemma}
\theoremstyle{plain}
\theoremstyle{plain}
\begin{document}

\title[]{Concentration of Hitting Times\\ in Erd\H{o}s-R\'enyi graphs}

\author[]{Andrea Ottolini}
\address[]{Department of Mathematics, University of Washington, Seattle, WA 98195, USA}
\email{ottolini@uw.edu}
\email{steinerb@uw.edu}

\author[]{Stefan Steinerberger}
%\address[]{Department of Mathematics, University of Washington, Seattle, WA 98195, USA}

\subjclass[]{60J10, 05C80, 60B20} 
\keywords{Erd\H{o}s-R\'enyi graphs, hitting time, random walk.}
\thanks{S.S. was supported by the NSF (DMS-2123224) and the Alfred P. Sloan Foundation.}

\begin{abstract} We consider Erd\H{o}s-R\'enyi graphs $G(n,p)$ for $0 < p < 1$ fixed and $n \rightarrow \infty$ and study the expected number of steps, $H_{wv}$, that a random walk started in $w$ needs to first arrive in $v$. A natural guess is that an Erd\H{o}s-R\'enyi random graph is so homogeneous that it does not really distinguish between vertices and  $H_{wv} = (1+o(1)) n$. L\"owe-Terveer established a CLT for the Mean Starting Hitting Time suggesting $H_{w v} =  n \pm \mathcal{O}(\sqrt{n})$. We prove the existence of a strong concentration phenomenon: $H_{w v}$ is given, up to a very small error of size $\lesssim (\log{n})^{3/2}/\sqrt{n}$, by an explicit simple formula involving only the total number of edges $|E|$, the degree $\deg(v)$ and the distance $d(v,w)$.
\end{abstract}

\maketitle

\vspace{20pt}

\section{Introduction and Results}

\subsection{The Phenomenon.} The purpose of this paper is to demonstrate a very strong concentration phenomenon for hitting times on Erd\H{o}s-R\'enyi random graphs. Given a realization of the graph, the hitting time $H_{wv}$ is the expected time that a simple random walk started in vertex $w$ needs to reach the vertex $v$ for the first time. We consider the distribution of hitting times $H_{wv}$ when the graph $G=G(n,p)$ is an Erd\H{o}s-R\'enyi random graph for $0 < p < 1$ fixed and $n \rightarrow \infty$. 
 The phenomenon we are interested in can be illustrated with a simple example, see Fig. 1.

\vspace{5pt}

\begin{center}
\begin{figure}[h!]
\begin{tikzpicture}[scale=1]
\node at (0,0) {\includegraphics[width=0.48\textwidth]{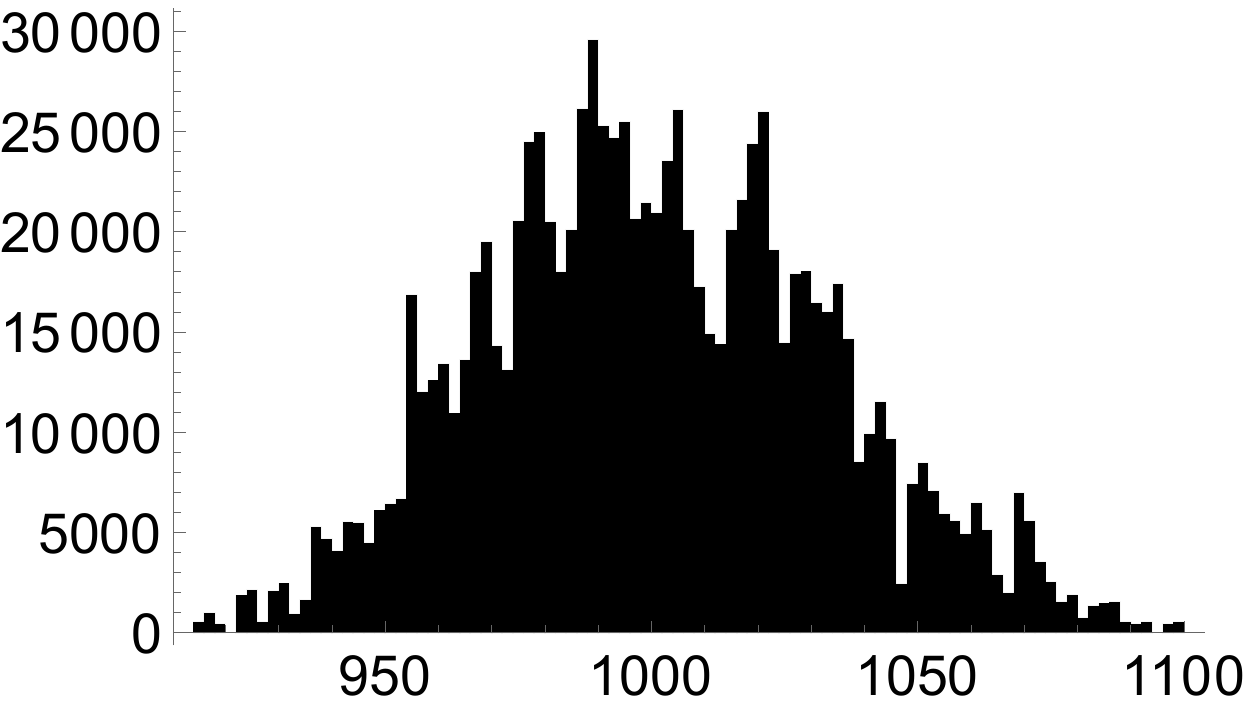}};
\node at (6.2,0) {\includegraphics[width=0.48\textwidth]{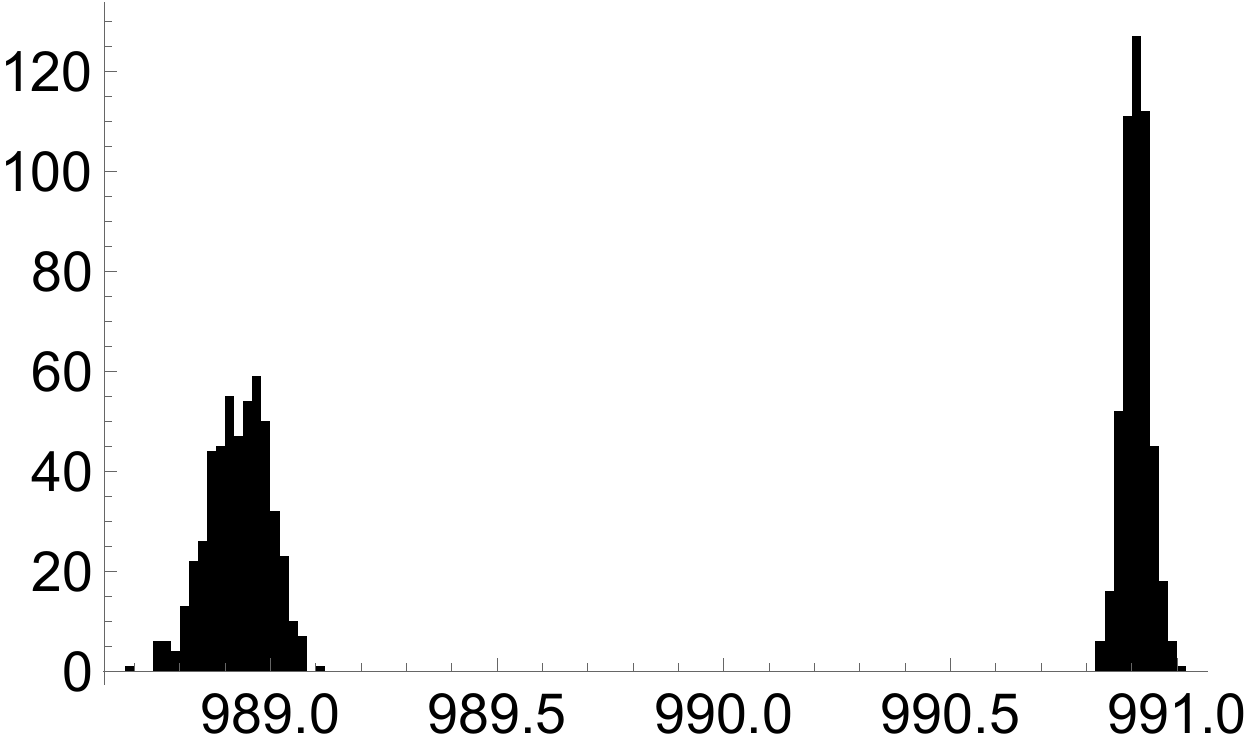}};
\end{tikzpicture}
\caption{Left: the distribution of all hitting times $H_{ij}$ for a realization of $G(1000,1/2)$. Right: the distribution of hitting times $H_{wv}$ for $w \neq v$ for a fixed vertex $v$. This quantity is tightly concentrated in two very short intervals.}
\end{figure}
\end{center}

 A natural first guess is that such a random walk is rapidly mixing and should spend roughly equal amounts of time in each vertex and all the vertices should, at least to leading order, be indistinguishable. This would suggest that $H_{wv} \sim n$ whenever $w \neq v$ and this was also suggested in the physics literature \cite{sood}. A version of this scaling result was rigorously proven by L\"owe \& Torres \cite{lowe}. If we denote by $H_{\mu v}:=\sum \mu(w)H_{wv}$ the hitting time starting from a point distributed according to a probability measure $\mu$, then $H_{\pi v} = (1+o(1)) n$ where $\pi(w):=\deg(w)/(2|E|)$ is the (random) stationary distribution. We also refer to subsequent results of Helali \& L\"owe \cite{hel}, von Luxburg, Radl \& Hein \cite{von} and Sylvester \cite{sylvester}.
A look at some numerical examples (see Fig. 1) suggests that the typical deviation of $H_{wv}$ from the mean appears to be on the order of $\sqrt{n}$. One would naturally expect the existence of a central limit theorem. Such a result was recently established by L\"owe \& Terveer \cite{lowe2}, who showed that $H_{\pi v} -n$, suitably rescaled, converges to a Gaussian (in distribution).
This, while not a statement about $H_{wv}$ in itself, does indicate that averaged hitting times have Gaussian fluctuations. 

\subsection{Main Result.} We provide what is essentially an explicit formula for $H_{wv}$
up to a very small error term that tends to 0 as $n \rightarrow \infty$.

\begin{thm}\label{mainresult}
Let $G(n,p)$ be an Erd\H{o}s-R\'enyi random graph with $0 < p < 1$. Then, as $n \rightarrow \infty$, we have with high probability that for any vertex $v$ and all vertices $w \neq v$
$$ H_{wv} = \frac{2|E|}{\deg(v)} +
 \begin{cases}
-1 \quad &\mbox{if}~(w,v) \in E \\
-1 + 1/p \quad &\mbox{if}~(w,v) \notin E 
\end{cases} \quad
+ \mathcal{O}\left(\frac{(\log{n})^{3/2}}{\sqrt{n}} \right)$$
where the implicit constant in the error term depends only on $p$. 
\end{thm}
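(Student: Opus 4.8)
The plan is to work with the standard identity expressing hitting times through the Green's function of the random walk, or equivalently through the fundamental matrix $(I-P+\mathbf{1}\pi^\top)^{-1}$, and to exploit the fact that for $G(n,p)$ with $p$ fixed the transition matrix $P = D^{-1}A$ is a small perturbation of the rank-one matrix $\mathbf{1}\pi^\top$. Concretely, one has the exact formula
\[
H_{wv} \;=\; \frac{Z_{vv} - Z_{wv}}{\pi(v)},
\]
where $Z$ is the fundamental matrix; since $\pi(v) = \deg(v)/(2|E|)$, the leading term $2|E|/\deg(v)$ will come from $Z_{vv}/\pi(v)$ and the constant correction ($-1$ or $-1+1/p$) together with the $(\log n)^{3/2}/\sqrt{n}$ error will come from a careful expansion of $Z_{vv}-Z_{wv}$. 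I would first reduce everything to estimating the entries of $(I - D^{-1}A)^{\#}$ (the group inverse), or equivalently solve the Poisson equation $(I-P)h = \mathbf{1} - \frac{\mathbf{1}_{\{v\}}}{\pi(v)}$ directly.

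Next I would set up the perturbative expansion. Write $A = p(\mathbf{J}-I) + (A - \mathbb{E}A) + p I$-type decomposition; the key spectral input is that the adjacency matrix of $G(n,p)$ has one eigenvalue near $pn$ with eigenvector close to $\mathbf{1}$, and all remaining eigenvalues of size $O(\sqrt n)$ (semicircle law / Füredi–Komlós), while all degrees concentrate: $\deg(u) = pn + O(\sqrt{n\log n})$ uniformly in $u$ with high probability. From this, $P = D^{-1}A$ has a spectral gap of order $1$, so the Neumann-type series for the fundamental matrix converges geometrically and each correction term is controllably small. The strategy is: (i) show $h_w := H_{wv}$ satisfies $h_w = 1 + \frac{1}{\deg(w)}\sum_{u\sim w} h_u$ for $w\neq v$, with $h_v = 0$; (ii) guess the ansatz $h_w = \frac{2|E|}{\deg(v)} + c(w)$ where $c(w)$ depends only on whether $w\sim v$, and plug it in; (iii) the equation for $w \not\sim v$ reads $c(w) = 1 + \text{(average of } h_u \text{ over } u \sim w)$, and since a $p$-fraction of the neighbours of $w$ are neighbours of $v$ and the walk almost never hits $v$ in one step from such a $w$, the average of $c(u)$ over $u \sim w$ is itself $O(1/\sqrt n)$-close to a fixed constant, which pins down $c$; (iv) the equation for $w \sim v$ must account for the $1/\deg(w)$ chance of stepping directly to $v$ (contributing $h_v = 0$ instead of $\approx 2|E|/\deg(v)$), which is exactly what produces the $-1$ versus $-1+1/p$ discrepancy. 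Then one bootstraps: having the ansatz solve the Poisson equation up to an $L^\infty$ residual of size $O((\log n)^{3/2}/\sqrt n)$, the spectral gap converts the residual bound into the same bound on $\max_w |h_w - \text{ansatz}|$.

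The main obstacle I anticipate is step (iii)/(iv): controlling the fluctuations of the local averaging operator $w \mapsto \frac{1}{\deg(w)}\sum_{u\sim w} h_u$ \emph{uniformly} over all $w$, simultaneously with a union bound over the $n$ choices of $v$, while keeping the error at $(\log n)^{3/2}/\sqrt n$ rather than the cruder $\sqrt{(\log n)/n}$. This requires that the number of common neighbours of $v$ and $w$ be $p^2 n + O(\sqrt{n\log n})$ for \emph{all} pairs, that degrees be uniformly $pn + O(\sqrt{n\log n})$, and — most delicately — that the \emph{second-order} averaged quantity $\sum_{u \sim w}(h_u - \bar h)$ not accumulate; this is where one genuinely needs the $O(\sqrt n)$ bound on the non-principal eigenvalues of $A$, not just degree concentration. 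A secondary technical point is handling the diameter/distance term: one must verify that with high probability $d(v,w)\le 2$ for all $w\neq v$ when $p$ is fixed, so that the case distinction in the theorem (which is really governed by $d(v,w)\in\{1,2\}$) is exhaustive. Once these uniform structural estimates are in place, the perturbative solution of the Poisson equation is essentially a geometric-series bookkeeping exercise.
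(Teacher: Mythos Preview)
Your route is genuinely different from the paper's. The paper never writes down the fundamental matrix or a Poisson-equation ansatz. Instead it fixes $v$, splits $V\setminus\{v\}$ into $A=\{w:(w,v)\in E\}$ and $B$ (distance~$2$), and iterates the one-step recursion \emph{three} times to get $H_{wv}=3+\sum_c \mu_w(c)H_{cv}+\text{(correction)}$, where $\mu_w$ is the law after three steps from $w$. A separate mixing estimate gives $\|\mu_w-\pi\|_1\lesssim(\log n)/n$ uniformly, and together with a cheap a~priori bound $|H_{cv}-n|\lesssim\sqrt{n\log n}$ (itself obtained by the same device) this yields $\max_{w_1,w_2\in B}|H_{w_1v}-H_{w_2v}|\lesssim(\log n)^{3/2}/\sqrt n$, and likewise on $A$. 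The value on $A$ is then pinned down by an \emph{exact} identity $\frac{1}{|A|}\sum_{w\in A}H_{wv}=2|E|/\deg(v)-1$, and one further application of the one-step recursion gives $H_B-H_A=1/p+o(1)$. It is a comparison-and-anchoring scheme rather than a perturbative inversion.

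There is, however, a real gap in your bootstrap. You write that ``having the ansatz solve the Poisson equation up to an $L^\infty$ residual of size $O((\log n)^{3/2}/\sqrt n)$, the spectral gap converts the residual bound into the same bound on $\max_w|h_w-\text{ansatz}|$.'' But the hitting-time equation is a \emph{Dirichlet} problem ($h_v=0$ and $(I-P)h=\mathbf 1$ on $V\setminus\{v\}$), and the relevant inverse is the killed Green's function $G_v=\sum_{k\ge 0}(P_v)^k$, whose row sums are exactly the hitting times: $\|G_v\|_{\ell^\infty\to\ell^\infty}=\max_w H_{wv}\sim n$. A residual of order $\sqrt{(\log n)/n}$ therefore only yields $\|h-\tilde h\|_\infty\lesssim\sqrt{n\log n}$, which is useless. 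The spectral gap of $P$ bounds $(I-Q)^{-1}$ with $Q=P-\mathbf 1\pi^\top$ on the $\pi$-mean-zero subspace, \emph{not} the Dirichlet inverse, and there is no automatic transfer between the two. To make your scheme work you would have to either expand $Z_{vv}-Z_{wv}=\sum_{k\ge 0}[(Q^k)_{vv}-(Q^k)_{wv}]$ and compute the $k=0,1,2$ contributions explicitly (the $k=2$ term is of the same order $1/n$ as the answer and does not drop out for free), bounding $k\ge 3$ via $\|Q\|_2^3\lesssim n^{-3/2}$; or else run a two-stage bootstrap as the paper does, first proving $|H_{wv}-n|\lesssim\sqrt{n\log n}$ and then feeding that back in. Either way this is precisely the step with content, not ``geometric-series bookkeeping.'' (Minor: your comparison of $(\log n)^{3/2}/\sqrt n$ with $\sqrt{(\log n)/n}$ is backward --- the latter is the smaller quantity.)
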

This explains the second picture in Fig. 1, we refer to Fig. 2 for more examples.
 We also prove (Lemma 1) that the average hitting time starting from vertices $w$ that are adjacent to $v$ is exactly $2|E|/\deg(v) - 1$. The Theorem implies that the central limit theorem obtained by L\"owe-Terveer \cite{lowe2} for $H_{\pi \cdot}$, the hitting time from a stationary initial point, holds for $H_{\mu v}$ for any starting measure $\mu$, and in particular it also holds for the hitting times $H_{w v}$ themselves, see \S 2.8. 

\begin{center}
\begin{figure}[h!]
\begin{tikzpicture}
\node at (0,0) {\includegraphics[width=0.45\textwidth]{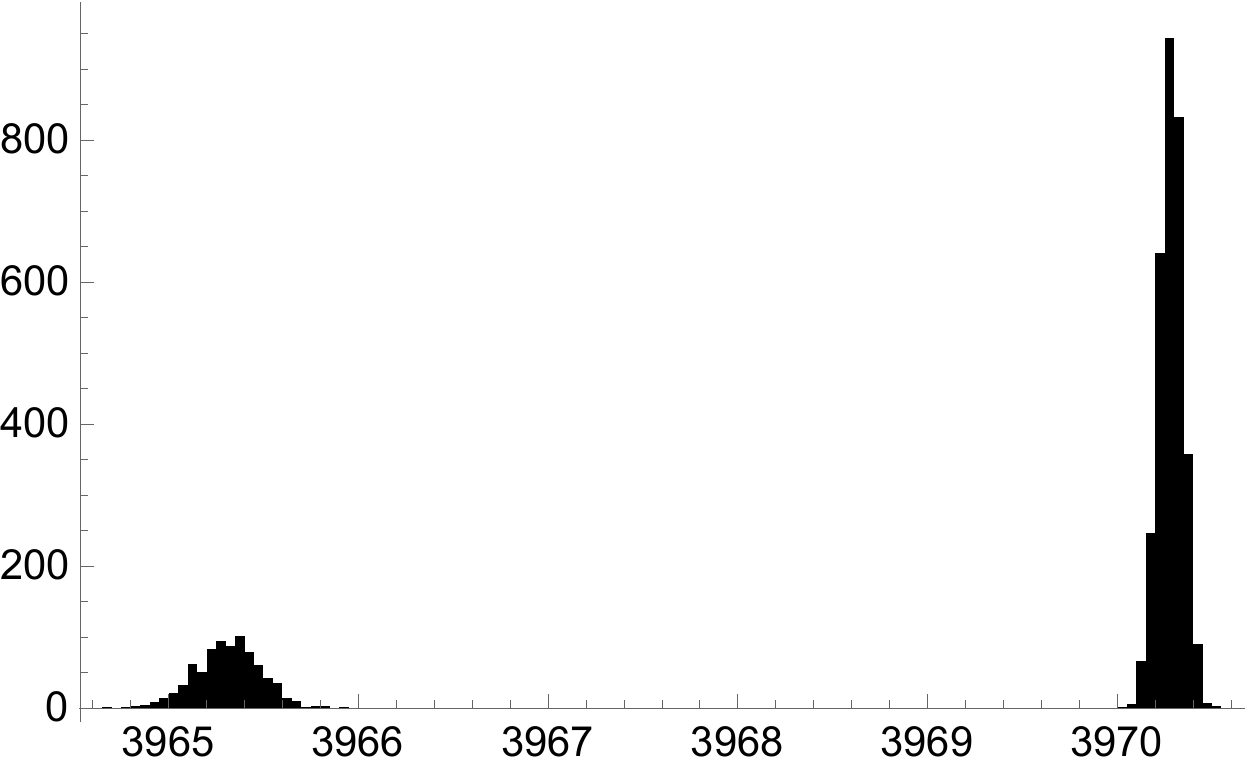}};
\node at (6,0) {\includegraphics[width=0.45\textwidth]{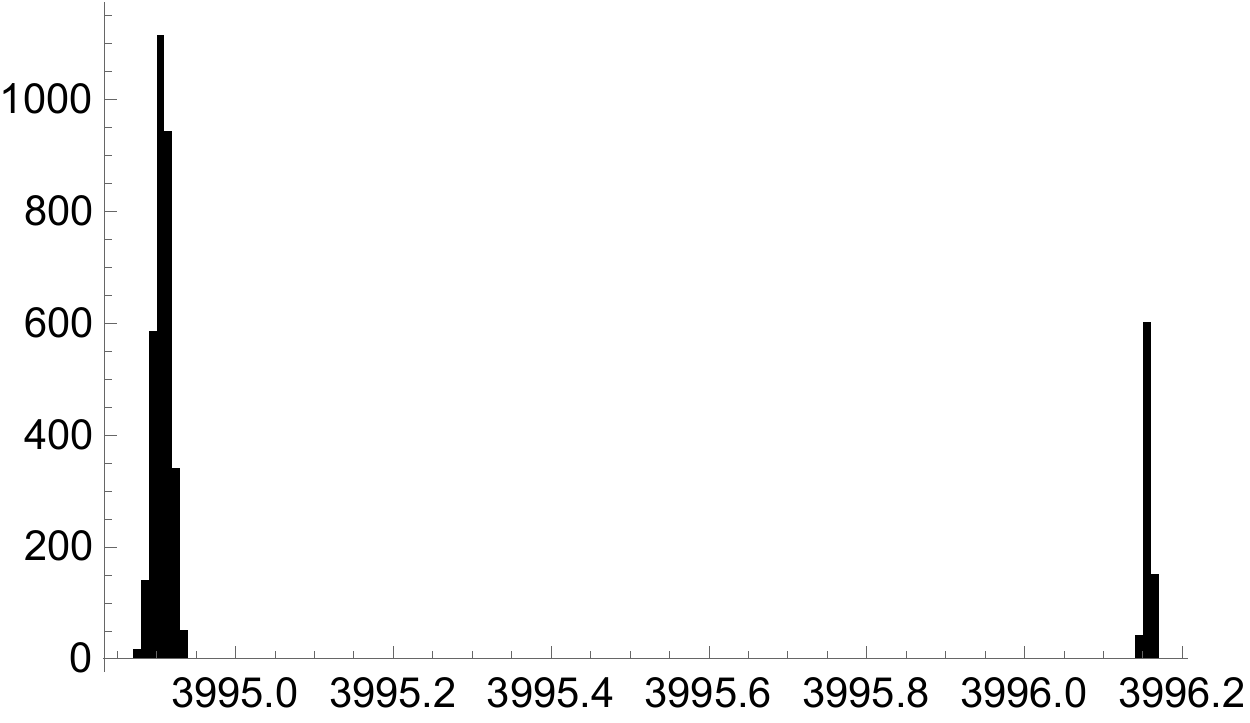}};
\end{tikzpicture}
\caption{Left: $G(4000, 0.2)$ with predicted localization at $2|E|/\deg(v) - 1 = 3965.3$ and $2|E|/\deg(v) +4 = 3970.3$. Right: $G(4000, 0.8)$ with predicted localization at $ 3994.9$ and $3996.16$. }
\end{figure}
\end{center}

One interpretation, which also informs the structure of the proof, is that an Erd\H{o}s-R\'enyi random graph has diameter 2 (with high probability) and can be understood as behaving a little bit like a three-state Markov chain: the vertex $v$ under consideration, the vertices at distance 1 from $v$, and the vertices at distance 2 from $v$. Note that transitions to $v$ are rare, given that a typical vertex has an high degree. During that time, the mixing properties of the graph are of sufficiently high quality that they essentially lead to near-deterministic behavior. However, it is important whether one starts at distance 1 or 2 from the target vertex $v$ (not surprising since we aim to determine the hitting term up to an error of $o(1)$). In fact, since transitions from most vertices at distance $2$ to the set of vertices at distance $1$ happen at rate roughly $\sim p$, the correction $1/p$ can be understood as a geometric hitting time. It is conceivable that the result has an extension when $p$ decays with $n$, generalizing accordingly the number of possible states since the diameter will increase with high probability (see, e.g., \cite{chu} for a variety of regimes). Moreover, while some features of our proof rely on the Erd\H{o}s-R\'enyi structure, a similar approach may lead to less refined results (e.g., a law of large number or a central limit theorem) for other models of random graphs, provided that the mixing time is sufficiently small compared to the hitting time, bypassing a spectral approach. This seems like a promising avenue for further research.

\subsection{Implications.}
The result has a number of implications in terms of known results.  L\"owe \& Torres \cite{lowe} prove, using a spectral theory approach, that the averaged hitting time $H_{\pi v}$ satisfies
$$ H_{\pi v} \geq \frac{2|E|}{\deg(v)} - 2$$
which can now be seen to be almost optimal. Our Theorem implies
\begin{align*}
 H_{\pi v} = \sum_{w} \pi(w) H_{wv} = \frac{2|E|}{\deg(v)} - 3 + \frac{1}{p} + \mathcal{O}\left(\frac{(\log{n})^{3/2}}{\sqrt{n}} \right).
\end{align*}
The result also has immediate implications for the commute time
$ \kappa_{wv} = H_{wv} + H_{vw}$
and the effective resistance
$$ \sigma_{wv} = \frac{\kappa_{wv}}{ 2 |E|}.$$ 
For example, we can deduce a very precise asymptotic expansion
$$ \sigma_{wv} = \frac{1}{\deg(v)} + \frac{1}{\deg(w)} +  \frac{2}{n^2 p}\begin{cases}
-1 \quad &\mbox{if}~(v,w) \in E \\
-1 + 1/p \quad &\mbox{if}~(v,w) \notin E 
\end{cases} 
+ \mathcal{O}\left(\frac{(\log{n})^{3/2}}{n^{5/2}} \right).$$ 
We remark that the effective resistance is intimately connected to a number of classical objects in probability, such as uniform spanning trees (a result dating back to Kirchhoff \cite{kir}), and it is easy to rephrase the asymptotic above correspondingly (see, e.g., Chapter $4$ in \cite{lyo}). Comparing to arbitrary graphs, a result of Lov\'asz \cite{lov} shows that the commute time always satisfies
$$ |E| \left( \frac{1}{\deg(v)} + \frac{1}{\deg(w)} \right) \leq \kappa_{wv} \leq  \frac{2 |E|}{1- \lambda_2} \left( \frac{1}{\deg(v)} + \frac{1}{\deg(w)} \right),$$
where $\lambda_2$ is the second eigenvalue of the transition matrix. For a typical Erd\H{o}s-R\'enyi random graph, our result says that the lower bound is almost saturated. We conclude with some spectral implication (see L\"owe \& Torres \cite{lowe} or Lov\'asz \cite{lov} for additional details). Let $A \in \left\{0,1 \right\}^{n \times n}$ denote the adjacency matrix of the graph and $D$ denote the degree matrix of the graph, we can introduce $B = D^{-1/2} A D^{-1/2}$. $B$ has largest eigenvalue $\lambda_1 = 1$ and has eigenvalues $\lambda_1 \geq \lambda_2 \geq \dots \geq \lambda_n$. We use $\sigma_k$ to denote the eigenvectors of length 1 associate to the $k-$th eigenvalue. Then, see L\"owe \& Torres \cite{lowe} or Lov\'asz \cite{lov}, we have
$$ H_{\pi v} = \frac{2|E|}{\deg(v)} \sum_{w=2}^{n} \frac{\sigma_{wv}^2}{1 - \lambda_k}.$$
Since $H_{\pi v}$ is nearly constant in $v$, this can be understood as a type of weighted equidistribution result for the eigenvectors of $B$.
Finally, our result allows for estimates related to the quasi-stationary distribution $\tilde \pi_v$ associated to a vertex $v$, that is to say, the Perron-Frobenius eigenvector of the sub-stochastic matrix obtained by neglecting the row and column corresponding to $v$ in the transition matrix (see \cite{persi} and references therein for a discussion on its use). It is a classical fact that its eigenvalue $\lambda_v$ satisfies
\begin{align*}
H_{\tilde \pi_v v}=\frac{1}{1-\lambda_v}, 
\end{align*}
and so our result gives two-sided bounds on $\lambda_v$ that are uniform in $v\in V$ (w.h.p.).

\section{Proof}

\subsection{Outline} It is a classical result (see, for example, \cite{klee}) that with high probability the diameter of $G(n,p)$ for fixed $p$ is 2. From now on, we will work on the event that this occurs. We fix a vertex $v \in V$ and consider from now on only the (first) hitting time of $v$ of a random walk starting in any vertex $w \in V$. Naturally, $H_{vv} = 0$.
For any other vertex $w \neq v$, we have
\begin{align} \label{eq:one}
 H_{wv} = 1+\frac{1}{\deg(w)} \sum_{(w,u) \in E} H_{uv}.
 \end{align} 

 Having fixed $v \in V$, we denote the set of all vertices at distance 1 from $v$ by $A$,
 $$ A = \left\{w \in V: d(v,w) = 1\right\}$$
 and the set of all vertices at distance 2 from $v$ by $B = V \setminus \left( \left\{ v \right\} \cup A \right)$.
We first establish concentration of the hitting times in $A$ and $B$ and then use the concentration to deduce everything else. The proof works as follows.
\begin{enumerate}
\item We first recall some basic facts from probability theory (in \S 2.2) that will be used throughout to argument: concentration of the degrees, a technical result on the convergence rate, and an exact computation for the hitting time when the starting distribution is uniform in $A$. \item In \S 2.3 we derive a combinatorial formula for the hitting time. It is then shown that many of the terms in the formula are tightly concentrated, conditional on a certain a priori bound to be established.
\item \S 2.4 shows the a priori bound
 $$H_{wv} = n + \mathcal{O}(\sqrt{n \log{n}})$$
 with high probability. This result is not as good as the main result but shows that certain terms that arise in the formula from \S 2.3 are more tightly concentrated than one might expect from $H_{wv} = (1+o(1))\cdot n$. 
\item \S 2.5 uses all the preceeding arguments to show strong concentration of the hitting time in the set $B$,
$$ \max_{w_1, w_2 \in B} | H_{w_1v} - H_{w_2v} | \lesssim \frac{(\log{n})^{3/2}}{\sqrt{n}}.$$
\item In \S 2.6 a similar argument is repeated for vertices in $A$ and
$$ \max_{w_1, w_2 \in A} | H_{w_1v} - H_{w_2v} | \lesssim \frac{(\log{n})^{3/2}}{\sqrt{n}}.$$
\item Having shown concentration in both $A$ and $B$, it is shown in \S 2.7 that
the difference in expectation between the hitting times in $A$ and $B$ is $1/p + \mathcal{O}(\sqrt{\log{n}}/\sqrt{n})$. Since we already know (Lemma 1) the expected hitting time for vertices in $A$, the result follows.
\item \S 2.8 observes an associated Central Limit Theorem immediately follows, \S 2.9 proves the technical Proposition introduced in \S 2.2.
\end{enumerate}

\subsection{Preliminary facts}
We first collect a couple of elementary facts that will be used several times. We will often omit w.h.p. (with high probability) in what follows: every time we invoke a property of Erd\H{o}s-R\'enyi random graphs it is to be understood as being valid with high probability. The sequence of degrees are Bernoulli random variables $\deg (v) \sim B(n,p)$, with
$ \mathbb{E}\deg(v) = (n-1)p$. While they are not independent, a union bound, together with a Chernoff bound for binomial random variables, gives
$$np - c_p  \sqrt{n} \sqrt{\log{n}} \leq \min_{v\in V} \deg(v) \leq \max_{v\in V} \deg(v) \leq np + c_{p} \sqrt{n} \sqrt{\log{n}}$$
We note that more precise results, in particular about the size of $c_p$, are known (see, e.g., \cite{gestalt}), though we will not try to optimize over that as they are not required for the remainder of the argument. In particular, this gives
\begin{equation}\label{boundegree}
\max_{v \in V} \left| \deg(v) - np \right| \lesssim \sqrt{n \log{n}} 
\end{equation}
with high likelihood and with an implicit constant depending on $p$ only.\\

Another ingredient that we will use several times is the following technical proposition about the behavior of a random walk on an Erd\H{o}s-Renyi graph.
\begin{proposition} Let $G=G(n,p)$ be an Erd\H{o}s-Renyi graph with $0 < p < 1$ fixed and $n \rightarrow \infty$. 
Let $v$ be an arbitrary vertex, $\mu_0 = \delta_{v}$ and $\mu_{k+1} = \mu_k D^{-1}A$ be the probability distribution after $k$ steps of the random walk. Using $\pi$ to denote the stationary distribution, for $k \geq 1$,
$$ \| \mu_k - \pi\|_{\ell^2} \lesssim_{k,p} \frac{(\log n)^{\frac{k-1}{2}}}{n^{k/2}}.$$
\end{proposition}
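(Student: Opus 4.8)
The plan is to estimate $\|\mu_k - \pi\|_{\ell^2}$ by iterating a single-step contraction estimate. The key observation is that $\mu_0 = \delta_v$ is supported on a single vertex, so after one step $\mu_1$ is the uniform distribution on the $\deg(v) \approx np$ neighbors of $v$; its $\ell^2$ norm is therefore of order $1/\sqrt{np}$, which is already $\lesssim n^{-1/2}$, and $\|\mu_1-\pi\|_{\ell^2} \leq \|\mu_1\|_{\ell^2} + \|\pi\|_{\ell^2} \lesssim n^{-1/2}$, matching the claim for $k=1$ (since $(\log n)^0 = 1$). For the inductive step from $k$ to $k+1$, write $\mu_{k+1}-\pi = (\mu_k - \pi)D^{-1}A$, using $\pi D^{-1}A = \pi$. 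The point is to bound the operator norm of $D^{-1}A$ acting on the space of signed measures orthogonal (in the appropriate sense) to $\pi$. Passing to the symmetric conjugate $B = D^{-1/2}AD^{-1/2}$, whose spectral gap $1 - \lambda_2$ is bounded below by a constant w.h.p. for fixed $p$ (this is the standard spectral gap for dense Erd\H{o}s--R\'enyi graphs, e.g.\ via Füredi--Komlós-type bounds on $\|A - p J\|$), one gains a constant factor of contraction per step in the $\ell^2$ sense. Since $D \asymp np$ uniformly w.h.p.\ by \eqref{boundegree}, the change of variables between $\mu D^{-1}A$ and the $B$-action costs only constant factors.

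More precisely, I would set up the recursion as follows. Let $f_k = \mu_k D^{1/2}$ viewed as a vector, so that $f_{k+1} = f_k B$ in the symmetric normalization, up to bounded-factor corrections from $D^{1/2}$ versus its mean. Write $f_k = \langle f_k, \phi_1\rangle \phi_1 + g_k$ where $\phi_1 \propto D^{1/2}\mathbf 1$ is the Perron direction; then $g_{k+1} = g_k B$ restricted to $\phi_1^\perp$, so $\|g_{k+1}\|_{\ell^2} \leq \lambda_* \|g_k\|_{\ell^2}$ with $\lambda_* = \max(|\lambda_2|,|\lambda_n|) \leq 1 - c_p < 1$ w.h.p. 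Translating back, $\|\mu_{k+1} - \pi\|_{\ell^2} \lesssim_p \|\mu_k - \pi\|_{\ell^2}$, i.e.\ one does \emph{not} in general gain a power of $n$ per step from the spectral gap alone --- and here is the subtlety. To get the claimed gain of $n^{-1/2}(\log n)^{1/2}$ per step one cannot rely on the spectral gap; instead one must exploit that $\mu_k - \pi$ is not a generic unit vector but has small $\ell^\infty$ mass once $k \geq 1$. The gain comes from the smoothing: if $\|\mu_k - \pi\|_{\ell^1}$ is controlled (total variation is $O(1)$, trivially, but in fact small once mixing begins) while $\|\mu_k-\pi\|_{\ell^\infty}$ is small, then $\|(\mu_k-\pi)D^{-1}A\|_{\ell^2}$ picks up a factor like $\|A\|_{\ell^1 \to \ell^2}/(np) \asymp \sqrt{n}/(np) = 1/\sqrt{np}$ from each row of $A$ having $\asymp np$ entries of size $1$, hence $\ell^2$-norm $\asymp\sqrt{np}$, against the $\ell^1$-normalization. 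So the genuine mechanism is: $\|\mu D^{-1}A\|_{\ell^2} \lesssim \tfrac{1}{np}\|A^T\|_{\ell^1\to\ell^2}\|\mu\|_{\ell^1} \asymp \tfrac{1}{\sqrt{np}}\|\mu\|_{\ell^1}$ for the part of $\mu$ orthogonal to $\pi$ that has been smoothed, combined with a total-variation decay from the spectral gap to control $\|\mu_k - \pi\|_{\ell^1}$.

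Concretely I would interleave two facts: (i) $\|\mu_{k}-\pi\|_{\ell^1} \lesssim_{k,p} (\log n)^{(k-1)/2} n^{-(k-1)/2}$, obtained by Cauchy--Schwarz from the $\ell^2$ bound at step $k$ together with the observation that $\mu_k-\pi$ is supported on $\leq n$ vertices, OR more efficiently by tracking that each convolution against a row of $D^{-1}A$ spreads mass over $\asymp np$ vertices; and (ii) the single-step inequality $\|\nu D^{-1}A\|_{\ell^2} \lesssim_p n^{-1/2}\|\nu\|_{\ell^1} + \lambda_*\|\nu\|_{\ell^2}$ valid for $\nu \perp \mathbf 1$, the first term dominating once $\|\nu\|_{\ell^1}$ is small. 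Feeding (i) into (ii) and iterating yields $\|\mu_{k+1}-\pi\|_{\ell^2} \lesssim n^{-1/2}\|\mu_k - \pi\|_{\ell^1} \lesssim n^{-1/2}\cdot(\log n)^{(k-1)/2}n^{-(k-1)/2} = (\log n)^{(k-1)/2} n^{-k/2}$; adjusting the log power by one to absorb the $\ell^\infty\to\ell^1$ bookkeeping gives exactly the stated bound.

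The main obstacle I anticipate is establishing the single-step smoothing inequality (ii) with the correct dependence on $\|\nu\|_{\ell^1}$ uniformly over the randomness of the graph: one needs that every row of $A$ has $\asymp np$ ones (this is \eqref{boundegree}) \emph{and} that there is no conspiracy making $\nu D^{-1}A$ large, which is where the spectral gap of $B$ re-enters to kill the component of $\nu$ that is "spread but not yet equidistributed." Controlling the interaction between the $\ell^1$ mass and the $\ell^2$ mass across iterations --- i.e.\ making sure the $\ell^1$ bound (i) really does improve by a factor $\sqrt{(\log n)/n}$ at each step and not merely stay bounded --- is the crux, and it is here that the Erd\H{o}s--R\'enyi structure (regularity of degrees plus spectral gap bounded away from $0$) is used essentially. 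The base case $k=1$ is immediate from the explicit form of $\mu_1$, and the diameter-$2$ fact guarantees $\pi$ is well-defined and $\deg(v)\geq 1$ everywhere, so there are no degenerate cases to worry about.
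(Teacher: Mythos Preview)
Your overall architecture---base case $k=1$ via $\|\mu_1\|_{\ell^2}+\|\pi\|_{\ell^2}\lesssim n^{-1/2}$, then induction through $\mu_{k+1}-\pi=(\mu_k-\pi)D^{-1}A$---is exactly the paper's. The gap is a single misconception about the size of the spectral gap, and everything after it is an unnecessary detour that does not close.

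You write $\lambda_*=\max(|\lambda_2(B)|,|\lambda_n(B)|)\le 1-c_p<1$ and conclude that ``one does not in general gain a power of $n$ per step from the spectral gap alone.'' This is the error: for $G(n,p)$ with $p$ fixed, F\"uredi--Koml\'os gives $|\lambda_2(A)|\lesssim_p\sqrt{n}$ while $\lambda_1(A)\sim np$, and since $D\asymp np\cdot I$ one has $\lambda_*(B)\lesssim_p n^{-1/2}$, not merely a constant less than $1$. So the spectral contraction on $\phi_1^\perp$ already delivers the factor $n^{-1/2}$ per step. Your own symmetrization $g_k=D^{-1/2}\mu_k^T$, $g_{k+1}=Bg_k$, with $g_k-g_\pi\perp\phi_1$, would then give $\|g_{k+1}-g_\pi\|_2\le\lambda_*\|g_k-g_\pi\|_2\lesssim n^{-1/2}\|g_k-g_\pi\|_2$ directly, and translating back via $\max\deg/\min\deg=1+o(1)$ finishes the induction. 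The mixed $\ell^1$/$\ell^2$ machinery in your second and third paragraphs is therefore unnecessary; worse, with only $\lambda_*<1-c_p$ the recursion $a_{k+1}\lesssim n^{-1/2}b_k+\lambda_* a_k$ does not close, because $b_k\le\sqrt{n}\,a_k$ is the best generic bound and feeding it back yields only $a_{k+1}\lesssim a_k$.

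For comparison, the paper packages the one-step contraction as a self-contained lemma: for any row vector $v$ with $\sum_i v_i=0$, $\|vD^{-1}A\|_{\ell^2}\lesssim_p\sqrt{\log n}/\sqrt{n}\,\|v\|_{\ell^2}$. Rather than symmetrizing, it splits $D^{-1}=(np)^{-1}I+D_2$ with $\|D_2\|\lesssim\sqrt{\log n}/n^{3/2}$, and then controls $\|vA\|_{\ell^2}$ using F\"uredi--Koml\'os for $|\lambda_2(A)|$ together with Mitra's estimate that the Perron eigenvector $\phi$ of $A$ satisfies $|\phi_i-n^{-1/2}|\lesssim\sqrt{\log n}/n$, so that the mean-zero condition $\sum v_i=0$ forces $|\langle v,\phi\rangle|\lesssim\sqrt{\log n}/\sqrt{n}\,\|v\|_{\ell^2}$. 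The $\sqrt{\log n}$ losses come only from degree concentration and the eigenvector perturbation, not from any $\ell^1$ bookkeeping.
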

 The Cauchy-Schwarz inequality implies that
\begin{align}\label{specgap}
\| \mu_k-\pi \|_1 &\leq \sqrt n\cdot \| \mu_k-\pi \|_2 \lesssim_{k,p}  \left( \frac{\log{n}}{n}\right)^{\frac{k-1}{2}}. 
\end{align}
We will only use his argument for $k=3$. The argument is self-contained and presented at the end of the paper. 
Our argument will benefit from having at least one probability measure $\nu$ for which we can obtain explicit estimates on the expected hitting time. There is a particularly simple choice.

\begin{lemma}\label{exact} Let $v \in V$ be arbitrary and let $A$ be the set of neighbors of $v$.
Consider the probability measure $\nu= 1_{A}/\deg(v)$. Then, we have
\begin{align*}
H_{\nu v}=\frac{2|E|}{\deg(v)}-1.
\end{align*}
\end{lemma}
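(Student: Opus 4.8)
The plan is to recognize $H_{\nu v}$ as (essentially) the expected return time of the random walk to $v$, and then to invoke the classical formula for return times of a reversible chain. First I would expand the definition: since $\nu$ is the uniform probability measure on the neighbourhood $A$ of $v$,
\[
H_{\nu v} = \sum_{w \in V} \nu(w)\, H_{wv} = \frac{1}{\deg(v)} \sum_{w \in A} H_{wv}.
\]
The key observation is that a single step of the simple random walk started at $v$ produces, by the very definition of the walk, a vertex distributed exactly according to $\nu$ (a uniformly chosen neighbour of $v$). Hence, writing $\tau_v^{+}$ for the first return time to $v$ (the least $t \geq 1$ with $X_t = v$) and $\mathbb{E}_v$ for expectation of the walk started at $v$, the Markov property applied at time $1$ gives
\[
\mathbb{E}_v[\tau_v^{+}] = 1 + \sum_{w \in A} \nu(w) H_{wv} = 1 + H_{\nu v}.
\]

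Next I would use the standard fact (Kac's formula) that for an irreducible Markov chain with stationary distribution $\pi$ one has $\mathbb{E}_v[\tau_v^{+}] = 1/\pi(v)$; irreducibility is available because we work on the event that $G(n,p)$ has diameter $2$, hence is connected. For the simple random walk on a graph, $\pi(v) = \deg(v)/(2|E|)$, so $\mathbb{E}_v[\tau_v^{+}] = 2|E|/\deg(v)$, and combining this with the previous display yields $H_{\nu v} = 2|E|/\deg(v) - 1$. If a self-contained justification of Kac's formula is preferred, it follows from the ergodic theorem together with renewal theory: the fraction of time the walk spends at $v$ up to time $T$ converges almost surely both to $\pi(v)$ and to $1/\mathbb{E}_v[\tau_v^{+}]$.

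There is essentially no obstacle here. The statement is an exact, deterministic identity valid for \emph{any} connected graph and any vertex $v$; the Erd\H{o}s--R\'enyi structure enters only through the (already assumed) fact that the graph is connected with high probability, which guarantees $\mathbb{E}_v[\tau_v^+] < \infty$ so that all quantities above are well defined. The only point to be slightly careful about is bookkeeping of the ``$+1$'' coming from the initial step out of $v$.
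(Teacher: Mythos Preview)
Your argument is correct. It differs from the paper's proof in the way the identity $H_{\nu v}=1/\pi(v)-1$ is reached. The paper starts the walk from the stationary distribution $\pi$, observes that one step later it is still distributed as $\pi$, and separates the event ``the starting point is $v$'' from its complement; this produces a linear relation between $H_{\pi v}$ and $H_{\nu v}$ in which the unknown $H_{\pi v}$ cancels. You instead start the walk from $v$, identify $1+H_{\nu v}$ directly with the expected first return time, and invoke Kac's formula $\mathbb{E}_v[\tau_v^{+}]=1/\pi(v)$. Your route is shorter and makes the conceptual content explicit (the lemma is just the return-time identity in disguise); the paper's route has the modest advantage of being entirely self-contained, effectively deriving the special case of Kac's formula it needs rather than citing it. Both arguments rely only on connectedness of the graph and are otherwise deterministic.
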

\begin{proof}
Consider a random walker starting from the stationary distribution $\pi$ given by $\pi(w) = \deg(w)/(2 |E|)$. There are two cases: either the initial vertex is already $v$ (and then the hitting time is 0) or it is not, in which case the new probability distribution after one step is distributed according to some new measure $\mu$. On the other hand, conditional on the starting point being $v$, the distribution after one step would be exactly $\nu$. Therefore, we have 
\begin{align*}
\mu=\frac{\pi-\pi(v)\nu}{1-\pi(v)}.
\end{align*}
In particular, we obtain
\begin{align*}
H_{\pi v}=(1-\pi(v))\left(1+H_{\mu v}\right)=
        1-\pi(v)+H_{\pi v}-\pi(v)H_{\nu v}.
\end{align*}
Rearranging, this gives 
\begin{align*}
H_{\nu v}=\frac{1}{\pi(v)}-1=\frac{2|E|}{\deg(v)}-1.
\end{align*}
\end{proof}
The result has an electrical network interpretation, which can be turned into a proof when $A=\{w\}$ is a single vertex connected to $v$ through $\deg(v)$ edges. The effective resistance $\kappa_{wv}$ is that of $\deg(v)$ resistances in parallel. Then,
\begin{align*}
\frac{H_{wv}+H_{vw}}{2E}=\kappa_{vw}=\frac{1}{\deg(v)},
\end{align*}
and the result follows since $H_{vw}=1$. 
%\begin{lemma}
%Fix a vertex $v$, and consider the measure $\tilde\pi$ given by 
%\begin{align*}
%\tilde \pi(w)=\frac{\deg(w)-1_{w\sim v}}{2E-2\deg (v)}.
%\end{align*}
%Then, one has
%\begin{align*}
%H_{\tilde \pi v}=\frac{2E-2\deg v}{\deg v-\sum_{w\sim v}\frac{1}{\deg w}}.
%\end{align*}
%\end{lemma}
%\begin{proof}
%It is enough to show that, for each $k\geq 0$, we have
%\begin{align*}
%\mathbb P(T_{\tilde\pi v}>k)=\left(1-\frac{\deg v-\sum_{w\sim %v}\frac{1}{\deg w}}{2E-2\deg v}\right)^k,
%\end{align*}
%for that the result follows by summing over $k$. To this aim, let $X_k$ be a random walker starting from the measure $\tilde \pi$. Notice that, for $u\neq v$, we have
%\begin{align*}
%\mathbb P(X_1=u|T_{\tilde \pi v}>1)&=\sum_{w\neq v}\tilde\pi(w)\frac{1_{w\sim u}}{\deg w-1_{w\sim v}}\\&=\sum_{w\sim u, w\neq v}\frac{\deg w-1_{w\sim v}}{2E-2\deg v}\frac{1}{\deg w-1_{w\sim v}}\\&=\frac{\deg u-1_{u\sim v}}{2E-2\deg v}\\&=\tilde \pi (u), 
%\end{align*}
%Moreover, we also deduce
%\begin{align*}
%\mathbb P(T_{\tilde \pi v}>1)=1-\sum_{w\sim v}\tilde \pi(w)\frac{1}{\deg w}=1-\frac{\deg v-\sum_{w\sim v}\frac{1}{\deg w}}{2E-2\deg v}.
%\end{align*}
\subsection{A formula for the Hitting time for $w \in B$.}
Let us now fix a vertex $w \in B$.  As an initial motivation, we consider the distribution of a random walk started in $w \in B$ with a length of 2 steps.
We start with the basic fact (introduced above as \eqref{eq:one}) that for a vertex $w \neq v$
\begin{align*}
 H_{wv} = 1+\frac{1}{\deg(w)} \sum_{(w,u) \in E} H_{uv}.
 \end{align*}
Since $w \in B$ and $d(w,v) = 2$, we can apply the identity twice. If we denote the arising distribution after 2 steps
by $\nu_w$, then this implies the exact equation
$$ H_{wv} = 2 + H_{\nu_w v}.$$
For the remainder of the argument, we will consider the distribution of random walks started in $w \in B$ after 3 steps and
this probability distribution will be denoted by $\mu_w$.
The analogous equation
\begin{align} \label{eq:two}
 H_{wv} = 3 + H_{\mu_wv} \qquad \mbox{is false} 
\end{align}
because \eqref{eq:one} is only valid in vertices $w \neq v$ but a random walk started in $w$ might end up in $v$ after 2 steps.
We also note that this is the only source of error. If the random walk ends up in $v$ after 2 steps, then it is uniformly distributed in $A$
after 3 steps and the equation \eqref{eq:two} mistakenly contributes
$$ \mathbb{P}\left( \mbox{random walk is in}~v~\mbox{after 2 steps}\right) \cdot  \frac{1}{|A|} \sum_{w \in A} H_{wv}$$
while the true hitting time would have been 2. We note that the sum, the average hitting time for a starting vertex in $A$, has already been computed above in Lemma 1 but we will not actually need its precise value at this point in the argument.
Correcting for this, we arrive at
\begin{align} \label{eq:three}
 H_{wv} &= 3 + \sum_{a \in V} \mu_w(a) \cdot H_{av} \\
 &+  \mathbb{P}\left( \mbox{random walk is in}~v~\mbox{after 2 steps}\right) \cdot \left(-1 -  \frac{1}{|A|} \sum_{a \in A} H_{av} \right). \nonumber
\end{align}

We will now argue that \eqref{eq:three} can be used to show that the hitting time is tightly concentrated by showing that each of its terms is tightly concentrated.
We start by noting that Lemma 1 implies that
$$ \left(-1 -  \frac{1}{|A|} \sum_{a \in A} H_{av} \right) = - \frac{2|E|}{\deg(v)}$$
is independent of $w$.
We continue by computing the probability of a random walk started in $w$ hitting $v$ within 2 steps. For this to happen, the first step of the random walk has to lead to $A$ and the second step has to lead from $A$ to $v$. The likelihood of moving from $w$ to $A$ is simply the number of neighbors that $w$ has in $A$ compared to its total degree. Introducing the abbreviation
$$ N_A(w) = \left\{ a \in A: (a,w) \in E \right\}$$
for the number of neighbors a vertex has in $A$, we have that the likelihood of moving from $w$ to $A$ is given by
$$ \frac{\# N_A(w)}{\deg(w)} = \frac{|A|}{n} + \mathcal{O}\left(\frac{\sqrt{\log{n}}}{\sqrt{n}}\right) \qquad \mbox{with high probability.}$$
owing to \eqref{boundegree}. The second ingredient is the likelihood of moving from $A$ to $v$ in the second step which, conditioned on the first step leading from $w$ to $A$, happens with likelihood
$$\frac{1}{\# N_A(w)} \sum_{z \in N_A(w)} \frac{1}{\deg(z)}.$$
This is an average over inverse degrees over the set of neighbors of $w$ that are in $A$. Abbreviating
$$ X = \left| \frac{1}{\# N_A(w)} \sum_{z \in N_A(w)} \frac{1}{\deg(z)}  - \frac{1}{(n-1)p} \right|$$
we obtain, appealing \eqref{boundegree} that (w.h.p.)
\begin{align*}
X &\leq \max_{v\in V}\left|\frac{1}{\deg(v)}-\frac{1}{(n-1)p}\right|\\
&\leq \left| \frac{1}{np \pm \mathcal{O}(\sqrt{n \log{n}})} - \frac{1}{np} \right| \lesssim \frac{ \sqrt{n \log{n}}}{n^2 p^2} \lesssim \frac{\sqrt{\log{n}}}{n^{3/2}}.
\end{align*}
We see that $\mathbb{P} = \mathbb{P}\left( \mbox{random walk in}~v~\mbox{after 2 steps}\right)$ satisfies
\begin{align} \label{eq:four}
\mathbb{P} &= \left( \frac{|A|}{n} + \mathcal{O}\left(\frac{\sqrt{\log{n}}}{\sqrt{n}}\right)  \right) \left( \frac{1}{np} + \mathcal{O}\left(\frac{\sqrt{\log{n}}}{n^{3/2}} \right)\right)  \nonumber \\
&= \frac{|A|}{n^2 p}  +  \mathcal{O}\left(\frac{\sqrt{\log{n}}}{ n^{3/2}}\right).  
\end{align}
Therefore, \eqref{eq:three} can be simplified as
$$  H_{wv} = 3 + \sum_{a \in V} \mu_w(a) \cdot H_{av} + \mbox{Error}(w)$$
where
$$ \mbox{Error}(w) = - \frac{2 |E|}{n^2 p} \pm \mathcal{O}\left( \frac{\sqrt{\log{n}}}{\sqrt{n}} \right)$$
and the error term is uniform over all choices of $w \in B$.

\subsection{A Cheap Uniform Bound}
The purpose of this short section is to deduce a cheap uniform bound. This estimate will then be bootstrapped in the subsequent sections to obtain the main result.
%\\ \\
%*** Alternative Attempt
%\\ \\
%Consider two walker starting from any initial distribution $\mu, \nu$. Owing to the maximal coupling interpretation of total variation distance, we have
%\begin{align*}
%\mathbb P(T_{mix}>k)=\|\mu_k-\nu_k\|_1\leq \sqrt n\|\mu_k-\nu_k\|_2\leq 2n^{\frac{1-k}{2}}
%\end{align*}
%where the last inequality is a triangular inequality and Cauchy-Schwartz, together with known bound on the spectral gap. Now, notice that, for arbitrary $k\in\mathbb N$, we have
%\begin{align*}
%\{\Hit(\mu)\neq \Hit(\nu)\}\subset \{T_{mix}>k\}\cup %\{\Hit(\mu)\leq k\}\cup \{\Hit(\nu)\leq k\}.
%\end{align*}
%Using a union bound, we obtain
%\begin{align*}
%\mathbb P(\Hit(\mu)\neq \Hit(\nu))\leq 2n^{\frac{1-k}{2}}+\frac{2k}{\min_{w\in V} \deg(w)}
%%Choosing $k=3$, we obtain 
%%%\end{align*}
%Finally, choose $\mu$ to be the delta mass at a point $w$, and $\nu$ to be the quasi-stationary distribution. Then
%%%O\left(\sqrt{n\ln n}\right).
%\end{align*}
%Therefore, we conclude that up to an event with probability $\mathcal O(1/n)$, the same result holds for $\Hit(w)$. Unfortunately, with a union bound we cannot cover all vertices $w$. 
%***
%\\ \\
%
\begin{lemma} \label{lem:1} Let $G=G(n,p)$ with $0 < p < 1$. Then, as $n \rightarrow \infty$, with high probability 
$$ \max_{v\neq w \in V} ~\left|  H_{wv} - n \right| \lesssim \sqrt{n \log{n}}.$$
and also 
$$ \max_{v\in V, w_1, w_2\in V\setminus\{v\}}| H_{w_1v} -  H_{w_2v}| \lesssim 1.$$
In both results, the implicit constants depend on $p$ only.
\end{lemma}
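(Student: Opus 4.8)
The plan is to derive both bounds from a single linear-algebraic identity together with the mixing estimate in \eqref{specgap}. Fix $v$ and let $h(w) = H_{wv}$ for $w \neq v$, with $h(v)=0$. Equation \eqref{eq:one} says that $h$ is harmonic off $v$ for the random walk, i.e. $(I - P)h = \mathbf 1$ on $V \setminus \{v\}$, where $P = D^{-1}A$ is the transition matrix. The standard probabilistic representation is $H_{wv} = \sum_{k \ge 0} \big( \mathbb P_w(X_k \neq v) - \pi(\{v\})^{\text{-weighted correction}} \big)$; more precisely, writing $\tau_v$ for the hitting time of $v$, one has $H_{wv} = \sum_{k=0}^{\infty} \big(\mathbb P_w(\tau_v > k) - \text{(return correction)}\big)$, but the cleanest route is the spectral/Green's function identity
$$ H_{wv} = \frac{1}{\pi(v)} \sum_{k=0}^{\infty} \big( \pi(v) - P^k(w,v) \big) \cdot \text{(adjusted)} ,$$
so I would instead work with the formula $H_{wv} = \sum_{k \ge 0}\big( \mathbb P_v(X_k = v) - \mathbb P_w(X_k = v)\big)/\pi(v)$, which is valid on any finite connected graph. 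Call this $(\star)$.

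Given $(\star)$, the first bound follows by splitting the sum at $k=3$: the $k=0,1,2$ terms are $O(1)/\pi(v)$ each, and $1/\pi(v) = 2|E|/\deg(v) = n(1+O(\sqrt{\log n / n}))$ by \eqref{boundegree}, so those contribute $n + O(\sqrt{n\log n})$ once one checks the leading term is exactly $n$-ish; the tail $\sum_{k \ge 3}$ is controlled by $\frac{1}{\pi(v)}\sum_{k \ge 3} \|\mu_k^{(v)} - \pi\|_\infty + \|\mu_k^{(w)} - \pi\|_\infty$. Using the Proposition with $k=3$ (and the semigroup property to push decay past $k=3$: $\|\mu_{k} - \pi\|_2 \le \lambda_2^{k-3}\|\mu_3 - \pi\|_2$, with $\lambda_2 = O(1/\sqrt n)$ for $G(n,p)$ — this eigenvalue bound is standard and can be cited), the tail is geometrically summable and of size $\frac{1}{\pi(v)}\cdot O\big((\log n / n)\big) = O(\log n)$, comfortably inside $O(\sqrt{n\log n})$. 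The difference bound is even easier from $(\star)$: $H_{w_1 v} - H_{w_2 v} = \frac{1}{\pi(v)}\sum_{k \ge 0}\big(\mathbb P_{w_2}(X_k = v) - \mathbb P_{w_1}(X_k=v)\big)$; the $k=0$ term vanishes (both are $0$), the $k=1,2$ terms are each $O(1/n)$ after multiplying by $1/\pi(v) = O(n)$ — giving $O(1)$ — and the tail $k \ge 3$ is $O(\log n)/\pi(v)\cdot$, wait, that is $O(\log n)$, not $O(1)$.

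To get the sharp $O(1)$ in the second statement I would be slightly more careful with the tail: bound $\sum_{k\ge 3}|\mathbb P_{w_1}(X_k=v) - \mathbb P_{w_2}(X_k = v)| \le \sum_{k \ge 3}\big(\|\mu_k^{(w_1)} - \pi\|_\infty + \|\mu_k^{(w_2)} - \pi\|_\infty\big)$, and use that $\|\mu_k - \pi\|_\infty \le \|\mu_k - \pi\|_1 \lesssim_p (\log n / n)^{(k-1)/2}$ from \eqref{specgap} for the first few $k$, then $\lambda_2$-decay thereafter; this sums to $\lesssim_p (\log n/n)$, and multiplied by $1/\pi(v) \lesssim_p n$ gives $\lesssim_p \log n$. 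So honestly the clean argument gives $O(\log n)$, not $O(1)$, in the second line — closing that last $\log n$ gap is the one real obstacle. I expect the authors either (a) invoke a better a priori input — e.g. that for $G(n,p)$ one has $\|\mu_k - \pi\|_\infty \lesssim n^{-k/2}$ without the $\log$, which would make the tail $\lesssim 1/n$ and the whole difference $O(1)$ — or (b) note that the $k=1,2$ discrepancies $\mathbb P_{w_i}(X_1 = v) = 1_{(w_i,v)\in E}/\deg(w_i)$ and the $k=2$ analogue, while individually $\Theta(1/n)$, produce contributions to $H_{w_1 v} - H_{w_2 v}$ that are genuinely $\Theta(1)$ (this is the source of the eventual $\{-1, -1+1/p\}$ split), so the claim "$\lesssim 1$" is really just asserting boundedness and the tail's $\log n$ must be killed via the sharper $n^{-k/2}$ mixing bound available for random graphs. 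So the plan: prove $(\star)$, split at $k=3$, use the Proposition plus a spectral-gap citation for the tail, and track the leading constant $1/\pi(v) \asymp n$ to land the first bound; for the second bound, upgrade the tail estimate (either via a no-$\log$ $\ell^\infty$ mixing bound for $G(n,p)$, or by absorbing it, since $\log n \lesssim \sqrt{n\log n}$ is not enough but a genuine $O(1)$ tail is) — and I'd flag that upgraded tail estimate as the main technical point to nail down.
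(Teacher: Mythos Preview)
Your approach is genuinely different from the paper's and, done with slightly more care, actually delivers the full statement. The paper does not use the spectral representation $(\star)$ at all. Instead it argues in three short steps: (i) a direct geometric--domination argument (from any $w$ one hits $v$ within two steps with probability $\gtrsim 1/n$) gives the crude bound $H_{wv}\le C_p n$; (ii) comparing two starting points after three steps and using $\|\mu_3^{(w_1)}-\mu_3^{(w_2)}\|_1\lesssim \log n/n$ together with the crude bound yields $|H_{w_1v}-H_{w_2v}|\lesssim \log n$; (iii) plugging in the exact identity $H_{\nu v}=2|E|/\deg(v)-1$ from Lemma~\ref{exact} pins down the location and gives the first bound. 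So the paper trades your Green's function identity for the crude a~priori bound plus Lemma~\ref{exact}.

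On the second bound: you correctly smelled trouble, and indeed the paper's own proof only establishes $|H_{w_1v}-H_{w_2v}|\lesssim \log n$, not $\lesssim 1$ --- the stated $\lesssim 1$ is never actually used downstream (only the first bound enters \S2.5). That said, your route can reach $O(1)$ without any new ingredient: the place you lose is bounding $|\mu_k(v)-\pi(v)|\le \|\mu_k-\pi\|_1$. Use instead $|\mu_k(v)-\pi(v)|\le \|\mu_k-\pi\|_2$; then Lemma~\ref{lem:tech} (which is exactly the contraction $\|\,(\mu-\pi)D^{-1}A\,\|_2\lesssim \sqrt{\log n/n}\,\|\mu-\pi\|_2$) makes the tail $\sum_{k\ge 3}\|\mu_k-\pi\|_2$ a geometric series of size $\lesssim (\log n)/n^{3/2}$, and after multiplying by $1/\pi(v)\asymp n$ the tail is $O((\log n)/\sqrt n)=o(1)$. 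The $k=0$ term gives $2|E|/\deg(v)=n+O(\sqrt{n\log n})$, the $k=1,2$ terms are each $O(1)$, and you are done with both claims. So your spectral-formula plan is correct; the ``one real obstacle'' you flagged is only an artifact of choosing the $\ell^1$ norm where $\ell^2$ suffices.
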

\begin{proof} We know from \eqref{boundegree} that, with high likelihood, 
$$ \max_{w \in V} \left| \deg(w) - np \right| \lesssim \sqrt{n \log{n}}.$$
We first argue that $H_{wv} \leq C_p n$ for some constant $0 < C_p < \infty$. Let $X_0, X_1, \ldots, $ denote the simple random walk on the graph. We note that there are $|A| = np \pm \mathcal{O}(\sqrt{n \log{n}})$ vertices where the likelihood of hitting $v$ in the next step is at least 
$$ \mathbb{P}\left(X_{k+1} = v\big| X_k \in A\right) = \frac{1}{\deg(X_k)} = \frac{1}{np} \pm \mathcal{O}\left( \frac{\sqrt{\log{n}}}{n^{3/2}}\right).$$
If the random walk $X_k \in B$, then
$$ \mathbb{P}\left(X_{k+1} \in A\big| X_k \in B\right) \geq p + o(1).$$
We can now do a simple case distinction: either $X_k \in A$ in which case $X_{k+1} = v$ with likelihood at least $1/(np)$ or $x_k \in B$ in which case $X_{k+2} = v$ with likelihood at least $1/n$. Using a geometric probability distribution for stochastic domination, we see that  $H_{wv} \leq C_p n$. \\

We can now consider an arbitrary starting vertex $w \in V$ together with a random walk of 3 steps started in $w$. Let $T_{wv}$ be the time that it takes to hit $v$ starting from $w$, so that $\mathbb E[T_{wv}]=H_{wv}$. There are two cases: the random walk happens to hit $v$ within the first $3$ steps or it does not. Therefore, if $\mu_3$ denotes the distribution after $3$ steps,  
\begin{align*}
H_{wv} &= \mathcal O(1) + \sum_{u \in V} \mu_3(u) \cdot \mathbb E[T_{uv} \cdot \mathbb E[1_{T_{w v}>3}|X_3=u]].
\end{align*}
Note that $$\mathbb E[T_{uv} \cdot \mathbb E[1_{T_{w v}>3}|X_3=u]]\leq H_{uv}\leq C_p n.$$
The same bound holds if we replace $w$ with the stationary distribution. Using
\begin{align*}
\|\mu_3-\pi\|_1\lesssim \frac{\log{n}}{n},
\end{align*}
which follows from \eqref{specgap} with $k=3$ we deduce that with high probability and uniformly over all $v\in V$ and $w_1, w_2\in V\setminus\{v\}$
$$ |H_{w_1v} - H_{w_2v}| \lesssim \log{n}.$$
Note that, by convexity, the result remains true if $w_2$ is replaced by an arbitrary probability measure. If we choose $\nu$ from Lemma \ref{exact}, we deduce
\begin{align*}
\left|H_{wv}-\frac{2E}{\deg v}+1\right|\lesssim \log{n}, 
\end{align*}
and thus we deduce since, using \eqref{boundegree}, that
\begin{align*}
\left|\frac{2E}{\deg v}-n\right|=\mathcal O\left(\sqrt {n\log n}\right)
\end{align*}
with high probability.
\end{proof}

\subsection{Concentration of Hitting Times in $B$.} We can now use these results to establish concentration of the hitting times in $B$.  For this purpose, let us keep our original arbitrary vertex $w \in B$ considered above and let us additionally consider another vertex $w_2 \in B$.  The distribution of a random walk started in $w_2$ after 3 steps will be denoted by $\mu_2$. Using \eqref{eq:three} once in $w$ and once in $w_2$ and invoking the uniform asymptotics \eqref{eq:four} we arrive at
\begin{align*}
  H_{wv} -  H_{w_2v}    &=    \mathcal{O}\left(\frac{\sqrt{\log{n}}}{\sqrt{n}}\right) +  \sum_{c \in V}  (\mu(c) -  \mu_2(c)) \cdot H_{cv} 
\end{align*}
Lemma \ref{lem:1}, the fact that two probability distributions $\mu$ and $\mu_2$ have equal $\ell^1-$norm and the Cauchy-Schwarz inequality allow us to rewrite this as
\begin{align*}
\left|  H_{wv} -  H_{w_2v}  \right|  &=    \left| \mathcal{O}\left(\frac{\sqrt{\log{n}}}{\sqrt{n}}\right) +  \sum_{c \in V}  (\mu(c) -  \mu_2(c)) \cdot (n + (H_{cv} - n))  \right|\\
  &\leq \mathcal{O}\left(\frac{\sqrt{\log{n}}}{\sqrt{n}}\right) +  \sum_{c \in V}  \left| \mu(c) -  \mu_2(c) \right| \cdot  \left| H_{cv} - n\right| \\
  &\lesssim  \mathcal O\left(\frac{\sqrt{\log{n}}}{\sqrt{n}}\right) + \sqrt{n \log{n}}\cdot \| \mu - \mu_2 \|_{\ell^1}.
\end{align*}
Using a triangular inequality and \eqref{specgap} with $k=3$, we obtain
$$ \| \mu - \mu_2 \|_{\ell^1} \lesssim \frac{\log{n}}{n},$$
leading to 
\begin{align}\label{concentrationforb}
\left|  H_{wv} -  H_{w_2v}  \right|  &\lesssim  \frac{(\log{n})^{3/2}}{\sqrt{n}}.
\end{align}

\subsection{Concentration for vertices in $A$.} We will now adapt the argument for vertices in $A$. Let $w \in A$ and consider the usual random walk in $A$ after 3 steps. We would like to argue analogously as before to obtain a combinatorial formula for the hitting time. There are now two types of mistakes that can happen: those that arise if the random walk already ends up in $v$ after 1 step or after 2 steps.
\begin{enumerate}
\item \textit{Scenario 1.} The random walk moves from $w$ to $v$ in the first step. In that case the true hitting time is 1 where as the naive formula \eqref{eq:two} produces an error given by the average hitting time in a point after 2 random steps started in $v$ (as before, this quantity is $\mathcal O(n)$ and independent of $w$).
\item  \textit{Scenario 2.} The random walk arrives in $v$ at the second step (but not the first). In that case, we deduce that the first random step has to lead from $w$ to another vertex in $A$ and then from that vertex to $v$. The error made in the naive formula is that it produces the average hitting time in $A$ as opposed to the true hitting time, which equals to $2$. 
\end{enumerate}
We deduce that there are two numbers $O_1, O_2=\mathcal O(n)$, independent of $w\in A$, with
\begin{align} 
 H_{wv} &= 3 + \sum_{a \in V} \mu(a) \cdot H_{av} \\
 &+  \mathbb{P}\left( \mbox{Scenario 1}\right) \cdot O_1 +  \mathbb{P}\left( \mbox{Scenario 2}\right) \cdot O_2. \nonumber
\end{align}
Note that $O_1$ is simply the expected hitting time weighted by the outcome of a random walk of length 2 started in $v$ and $O_2$ is the expected hitting time weighted by the distribution of a random walk of length 1 started in $v$ (which, as above, is merely the average hitting time in $A$).
It remains to show that, just as above, the likelihoods of the two scenarios depend on $w$ in a weak sense. We start with Scenario 1.  We have, using \eqref{boundegree},
$$  \mathbb{P}\left( \mbox{Scenario 1}\right) = \frac{1}{\deg(w)} = \frac{1}{np \pm \mathcal{O}(\sqrt{n \log{n}})} = \frac{1}{np} + \mathcal{O}\left( \frac{\sqrt{\log{n}}}{n^{3/2}}\right).$$
The likelihood of Scenario 2 was already implicitly computed above: there we computed the likelihood of having a random walk from a vertex in $B$ travel to a vertex in $A$ and then to $v$. However, that likelihood is independent of whether one starts in $A$ or in $B$ and the very same argument as above (see \eqref{eq:four}) gives
\begin{align} 
  \mathbb{P}\left( \mbox{Scenario 2}\right) = \frac{|A|}{n^2 p}  +  \mathcal{O}\left(\frac{\sqrt{\log{n}}}{ n^{3/2}}\right)  \nonumber
\end{align}
The remainder of the argument is exactly the same as above and we deduce that
\begin{align}\label{concentrationfora}
\max_{w_1, w_2 \in A} |H_{w_1v} -H_{w_2v} | \lesssim \frac{(\log{n})^{3/2}}{\sqrt{n}}.
\end{align}

\subsection{Proof of the Theorem}
At this point, we know that the expected hitting time in both $A$ and $B$ is essentially constant, owing to \eqref{concentrationfora} and \eqref{concentrationforb}, up to an error of size $\lesssim (\log n)^{3/2}/\sqrt{n}$. We define the average hitting times in $A$ and in $B$
\begin{align*}
H_A=\frac{1}{|A|}\sum_{w\in A}H_{wv}, \quad H_B=\frac{1}{|B|}\sum_{w\in B}H_{wv}.
\end{align*}
From Lemma \eqref{exact}, we know
\begin{equation}\label{baba}
H_A=\frac{2E}{\deg v}-1,  
\end{equation}
and, appealing to \eqref{concentrationfora}, we obtain that for $w\in A$
\begin{align*}
H_{wv}=\frac{2E}{\deg v}-1+\mathcal O\left(\frac{(\log n)^{3/2}}{\sqrt n}\right).
\end{align*}
It remains to show that $$H_{B} - H_{A} = \frac{1}{p} +   \mathcal{O}\left( \frac{(\log{n})^{3/2}}{\sqrt{n}} \right),$$ 
for the result will follow from \eqref{concentrationforb} and \eqref{baba}. Let now $w \in B$ be an arbitrary vertex. Performing one step of a random walk in $w$, we deduce that
\begin{align*}
H_B &= H_{wv} + \mathcal{O}\left( \frac{(\log{n})^{3/2}}{\sqrt{n}} \right) \\
&= \mathcal{O}\left( \frac{(\log{n})^{3/2}}{\sqrt{n}} \right) + 1 +  \frac{1}{\deg(w)} \sum_{z \in N_A(w)} H_{zv} +  \frac{1}{\deg(w)} \sum_{z \in N_B(w)} H_{zv}.
 \end{align*}
At this point we can invoke the strong concentration of hitting times in $A$ and $B$ once more to deduce that

\begin{align*}
H_B &= \mathcal{O}\left( \frac{(\log{n})^{3/2}}{\sqrt{n}} \right) + 1 +  \frac{1}{\deg(w)} \sum_{z \in N_A(w)} H_A + \frac{1}{\deg(w)} \sum_{z \in N_B(w)} H_B \\
&=  \mathcal{O}\left( \frac{(\log{n})^{3/2}}{\sqrt{n}} \right) + 1 +  \frac{\# N_A(w)}{\deg(w)} H_A + \frac{\# N_B(w)}{\deg(w)} H_B.
 \end{align*}

Vertices in $B$ only have neighbors in $A$ and in $B$ and therefore, for $w \in B$,
$$ 1 = \frac{\#N_A(w) + \# N_B(w)}{\deg(w)}$$
from which we deduce
$$ \frac{\# N_A(w)}{\deg(w)} H_B =   \mathcal{O}\left( \frac{(\log{n})^{3/2}}{\sqrt{n}} \right) + 1 +  \frac{\# N_A(w)}{\deg(w)} H_A$$
and thus
$$ H_B =   \mathcal{O}\left( \frac{(\log{n})^{3/2}}{\sqrt{n}} \right) + \frac{\deg(w)}{ \# N_A(w)} + H_A.$$
We also note that, from \eqref{boundegree},
\begin{align*}
\frac{\deg(w)}{\# N_A(w)} &= \frac{np + \mathcal{O}(\sqrt{n \log{n}})}{ p |A| +  \mathcal{O}(\sqrt{n \log{n}})} \\
&=  \mathcal{O}\left( \frac{\sqrt{\log{n}}}{\sqrt{n}} \right)  + \frac{n}{|A|} = \frac{1}{p} + \mathcal{O}\left( \frac{(\sqrt{\log{n}}}{\sqrt{n}} \right).
\end{align*}
Thus
\begin{align*} \label{eq:diff}
H_B - H_A = \frac{1}{p} +   \mathcal{O}\left( \frac{ (\log{n})^{3/2}}{\sqrt{n}} \right).
\end{align*}

\subsection{A Central Limit Theorem} We conclude by showing how the Theorem immediately implies a central limit theorem for the hitting times starting from any point. It can be easily generalized to an arbitrary initial configuration that does not include the target point. This follows from the Theorem and the fact that the main fluctuation comes from the fluctuation of the degree of the target vertex while being virtually independent (up to fluctuations of size $\mathcal{O}(1)$) of everything else.

\begin{corollary}
Fix two vertices $1, 2$. Then, the distribution of $H_{12}$ over all Erd\H{o}s-R\'enyi random graphs satisfies, as $n \rightarrow \infty$,
\begin{align*}
\sqrt{\frac{p}{n(1-p)}}\left(H_{12}-n\right)\Rightarrow \mathcal N(0,1)
\end{align*}
\end{corollary}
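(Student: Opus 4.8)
The plan is to feed the explicit formula from the Theorem into the classical central limit theorem for a binomial degree, after isolating which source of randomness survives the $\sqrt n$ rescaling. Throughout one works on the high-probability event $\mathcal{G}_n$ on which the Theorem holds; there
$$ H_{12} = \frac{2|E|}{\deg(2)} + c + \mathcal{O}\!\left(\frac{(\log n)^{3/2}}{\sqrt n}\right), \qquad c \in \left\{-1,\, -1+\tfrac1p\right\}. $$
Since $\mathbb{P}(\mathcal{G}_n) \to 1$, altering $H_{12}$ off $\mathcal{G}_n$ does not change the limiting distribution, so it suffices to analyse the right-hand side; $c$ is bounded, hence $\mathcal{O}(1)$, and the last term is $o_P(1)$.

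The first real step is to decouple $|E|$ from $\deg(2)$. Write $|E| = \deg(2) + |E'|$, where $|E'|$ counts the edges avoiding vertex $2$; then $|E'| \sim \mathrm{Bin}\!\big(\binom{n-1}{2},p\big)$ is independent of $\deg(2) \sim \mathrm{Bin}(n-1,p)$, and
$$ \frac{2|E|}{\deg(2)} = 2 + \frac{2|E'|}{\deg(2)} = 2 + (n-2)\cdot\frac{1 + \alpha_n}{1 + \beta_n}, $$
where $\alpha_n = \big(2|E'| - (n-1)(n-2)p\big)/\big((n-1)(n-2)p\big)$ and $\beta_n = \big(\deg(2) - (n-1)p\big)/\big((n-1)p\big)$. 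Chebyshev's inequality (using $\mathrm{Var}\,|E'| \asymp n^2$ and $\mathrm{Var}\deg(2) \asymp n$) gives $\alpha_n = \mathcal{O}_P(n^{-1})$ and $\beta_n = \mathcal{O}_P(n^{-1/2})$. On the event $\{|\beta_n| \le \tfrac12\}$, of probability $1-o(1)$, expand $(1+\beta_n)^{-1} = 1 - \beta_n + \mathcal{O}(\beta_n^2)$; since $(n-2)\alpha_n = \mathcal{O}_P(1)$ and $(n-2)\beta_n^2 = \mathcal{O}_P(1)$, and $(n-2)/(n-1) \to 1$, this yields
$$ \frac{2|E|}{\deg(2)} = n - (n-2)\beta_n + \mathcal{O}_P(1) = n - \frac{\deg(2) - (n-1)p}{p} + \mathcal{O}_P(1), $$
and hence, combining with the first step, $H_{12} - n = -\big(\deg(2) - (n-1)p\big)/p + \mathcal{O}_P(1)$.

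Finally, multiply through by $\sqrt{p/(n(1-p))}$: the remainder becomes $\mathcal{O}_P(n^{-1/2}) = o_P(1)$, while the main term simplifies to
$$ -\sqrt{\frac{p}{n(1-p)}}\cdot\frac{\deg(2) - (n-1)p}{p} = -\frac{\deg(2) - (n-1)p}{\sqrt{np(1-p)}}. $$
By the de Moivre--Laplace theorem, $\big(\deg(2) - (n-1)p\big)/\sqrt{(n-1)p(1-p)} \Rightarrow \mathcal{N}(0,1)$; since $\sqrt{(n-1)/n} \to 1$ and $\mathcal{N}(0,1)$ is symmetric, this quantity also converges weakly to $\mathcal{N}(0,1)$, and an application of Slutsky's theorem finishes the proof.

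I do not anticipate a substantive obstacle; this is essentially the bookkeeping the authors indicate. The only point demanding care is that the Theorem holds just on the event $\mathcal{G}_n$ of probability tending to one (with a deterministic error bound there), so the argument must be phrased throughout in the language of convergence in probability and in distribution, invoking the elementary fact that a random sequence modified on an asymptotically negligible event keeps its weak limit. The secondary point is to track the two fluctuation scales correctly --- the target-degree fluctuation $\beta_n \sim n^{-1/2}$ is what survives the rescaling, whereas the global edge-count fluctuation $\alpha_n \sim n^{-1}$ and every quadratic Taylor term are absorbed into $\mathcal{O}_P(1)$ --- and it is precisely the independent decomposition $|E| = \deg(2) + |E'|$ that makes this transparent.
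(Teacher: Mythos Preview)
Your proof is correct and follows essentially the same route as the paper's: feed the Theorem's formula into the binomial CLT for $\deg(2)$, observe that the edge-count fluctuation is one order smaller and absorbed into $\mathcal{O}_P(1)$, and conclude via Slutsky. The only cosmetic difference is that you isolate the edge-count contribution through the independent decomposition $|E|=\deg(2)+|E'|$, whereas the paper simply quotes the concentration bound $2|E|=n^2p\bigl(1+\mathcal{O}(\sqrt{\log n}/n)\bigr)$ and writes $\deg(2)=np\bigl(1-\sqrt{(1-p)/(pn)}\,Z_n\bigr)$ before expanding the ratio; the substance is identical.
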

\begin{proof}
The central limit theorem for a binomial random variable says that 
\begin{align*}
\deg(2)=np\left(1-\sqrt{\frac{1-p}{pn}}Z_n\right), 
\end{align*}
where $Z_n$ converges in distribution to a normal random variable. A standard concentration bound for the number of edges gives
\begin{align*}
2 |E|=n^2p\left(1+\mathcal O\left(\frac{\sqrt{\log n}}{n}\right)\right)
\end{align*}
with high probability. Combining with our main theorem, we obtain
$$
H_{12}=n\frac{1+\mathcal O\left(\frac{\sqrt{\log n}}{n}\right)}{1-\sqrt{\frac{1-p}{np}}Z_n}+\mathcal O\left(1\right),
$$
where the error terms are uniform with respect to the measures $\mu_n$. 
Therefore
\begin{align*}
\sqrt{\frac{p}{n(1-p)}}\left(H_{12}-n\right)&=\sqrt{\frac{np}{1-p}}\left[\frac{1+\mathcal O\left(\frac{\sqrt{\log n}}{n}\right)}{1-\sqrt{\frac{1-p}{np}}Z_n}-1+\mathcal O\left(\frac{1}{n}\right)\right]\\&=\frac{Z_n+\mathcal O\left(\frac{\sqrt{\log n}}{\sqrt n}\right)}{1+\mathcal O\left(\frac{1}{\sqrt n}\right)}+\mathcal O\left(\frac{1}{\sqrt n}\right).
\end{align*}
The result then follows from Slutsky's theorem. 
\end{proof}

\subsection{Proof of the Proposition}
The Proposition will be proven by induction. The statement is very easy to prove when $k=1$. For the induction step $k \rightarrow k+1$, we make use of a technical Lemma.

\begin{lemma} \label{lem:tech}
Let $v \in \mathbb{R}^n$ be a row vector whose entries sum to 0. Then
$$ \| v D^{-1}A \|_{\ell^2} \leq c_{p} \frac{\sqrt{\log{n}}}{\sqrt{n}} \|v\|_{\ell^2}.$$
\end{lemma}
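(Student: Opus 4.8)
The plan is to decompose $v$ into components along the eigenvectors of $B = D^{-1/2}AD^{-1/2}$ and track what the map $v \mapsto vD^{-1}A$ does to each. Writing $u = vD^{1/2}$, one checks that $\|vD^{-1}A\|_{\ell^2}$ is comparable (up to factors of $\max\deg/\min\deg = 1 + \mathcal{O}(\sqrt{\log n/n})$, hence harmless) to $\|uB\|_{\ell^2}$, so it suffices to bound $\|uB\|_{\ell^2}$ in terms of $\|u\|_{\ell^2}$ for $u$ in the appropriate subspace. The hypothesis that the entries of $v$ sum to zero translates, again up to the same negligible degree fluctuations, into the statement that $u$ is essentially orthogonal to the Perron eigenvector $\sigma_1 = D^{1/2}\mathbf{1}/\|D^{1/2}\mathbf{1}\|$ of $B$. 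On the orthogonal complement of $\sigma_1$, the operator norm of $B$ is $\max(|\lambda_2|, |\lambda_n|)$, so the whole estimate reduces to the spectral gap bound $\max(|\lambda_2|,|\lambda_n|) \lesssim_p \sqrt{\log n}/\sqrt{n}$ for the normalized adjacency matrix of $G(n,p)$.

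The key steps, in order, are: (i) set $u = vD^{1/2}$ and verify the two-sided comparison $\|vD^{-1}A\|_{\ell^2} \asymp \|uB\|_{\ell^2}$ and $\|v\|_{\ell^2}\asymp\|u\|_{\ell^2}$ using \eqref{boundegree}; (ii) show that $\langle u,\sigma_1\rangle$ is small — precisely, $|\langle u,\sigma_1\rangle| \lesssim \sqrt{\log n/n}\,\|v\|_{\ell^2}$ — because $\sum_i v_i = 0$ forces $\langle v,\mathbf{1}\rangle = 0$, and $\sigma_1$ differs from a multiple of $\mathbf 1$ only through the degree fluctuations; (iii) split $u = \langle u,\sigma_1\rangle\sigma_1 + u^\perp$, apply $B$, and bound $\|uB\|_{\ell^2} \le |\langle u,\sigma_1\rangle|\cdot 1 + \|u^\perp B\|_{\ell^2} \le |\langle u,\sigma_1\rangle| + \rho\,\|u^\perp\|_{\ell^2}$ where $\rho = \max(|\lambda_2|,|\lambda_n|)$; (iv) invoke the eigenvalue bound $\rho \lesssim_p \sqrt{\log n}/\sqrt{n}$; (v) collect: both surviving terms are $\lesssim_p \sqrt{\log n/n}\,\|v\|_{\ell^2}$, which is the claim.

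For step (iv) I would not reprove the spectral gap from scratch but cite the standard concentration results for the adjacency matrix of dense Erdős–Rényi graphs: $A - \mathbb{E}A$ has operator norm $\mathcal{O}(\sqrt{n})$ with high probability (e.g. via the matrix Bernstein inequality or Füredi–Komlós type bounds, with the $\sqrt{\log n}$ coming cheaply from a crude union/truncation bound), while $\mathbb{E}A = p(\mathbf{1}\mathbf{1}^\top - I)$ is rank one plus $-pI$; dividing by the degrees (all $\approx np$) turns the $\mathcal{O}(\sqrt n)$ bulk into $\mathcal{O}(1/\sqrt n)$ and moves the rank-one part onto $\sigma_1$, leaving $\rho \lesssim \sqrt{\log n}/\sqrt n$ on $\sigma_1^\perp$.

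The main obstacle is step (ii)–(iii) bookkeeping: $\sigma_1$ is itself random and only approximately proportional to $\mathbf 1$, so one must be careful that the error incurred in replacing "$\perp\mathbf 1$" by "$\perp\sigma_1$" is genuinely of order $\sqrt{\log n/n}$ relative to $\|v\|_{\ell^2}$ and does not secretly lose a factor. Concretely, one writes $\sigma_1 = c(D^{1/2}\mathbf 1)$ with $c = \|D^{1/2}\mathbf 1\|^{-1}$, so $\langle u,\sigma_1\rangle = c\,\langle v D^{1/2}, D^{1/2}\mathbf 1\rangle = c\sum_i \deg(i) v_i = c\sum_i(\deg(i)-np)v_i$, and Cauchy–Schwarz with \eqref{boundegree} gives $|\langle u,\sigma_1\rangle| \le c\,\|v\|_{\ell^2}\big(\sum_i(\deg(i)-np)^2\big)^{1/2} \lesssim (np)^{-1/2}n^{-1/2}\cdot \|v\|_{\ell^2}\cdot\sqrt{n}\sqrt{n\log n}$ — one then checks this simplifies to the desired $\sqrt{\log n/n}\,\|v\|_{\ell^2}$ after accounting for $c \asymp (n^2 p)^{-1/2}$. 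Everything else is routine once the spectral gap is in hand.
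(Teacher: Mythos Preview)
Your overall strategy is sound and, once corrected, is arguably cleaner than the paper's proof, but there is a sign error in your substitution that breaks step~(i). With $u = vD^{1/2}$ one has $uB = vD^{1/2}D^{-1/2}AD^{-1/2} = vAD^{-1/2}$, which is not $vD^{-1}A$ (the matrices $D^{-1}$ and $A$ do not commute), and the two quantities are \emph{not} comparable merely up to a factor of $\max\deg/\min\deg$. The correct choice is $u = vD^{-1/2}$: then $uB\,D^{1/2} = vD^{-1}A$ exactly, so $\|vD^{-1}A\|_{\ell^2}$ and $\sqrt{np}\,\|uB\|_{\ell^2}$ differ by at most a factor $\sqrt{\max\deg/\min\deg} = 1 + \mathcal{O}(\sqrt{\log n/n})$, and likewise $\|v\|_{\ell^2} \asymp \sqrt{np}\,\|u\|_{\ell^2}$. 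A bonus of this fix is that step~(ii) becomes trivial: $\langle u,\sigma_1\rangle = c\,\langle vD^{-1/2}, D^{1/2}\mathbf{1}\rangle = c\sum_i v_i = 0$ \emph{exactly}, so $u \in \sigma_1^{\perp}$ with no error term to chase. (Incidentally, your arithmetic at the end of the proposal also slips: with your $u=vD^{1/2}$ the Cauchy--Schwarz bound yields $|\langle u,\sigma_1\rangle|\lesssim \sqrt{\log n}\,\|v\|_{\ell^2}$, not $\sqrt{\log n/n}\,\|v\|_{\ell^2}$; it only becomes $\sqrt{\log n/n}\,\|u\|_{\ell^2}$ after converting $\|v\|$ to $\|u\|$, which you elide.)

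With that correction your route is genuinely different from the paper's. The paper splits $D^{-1} = \tfrac{1}{np}I + D_2$ with $\|D_2\|\lesssim \sqrt{\log n}/n^{3/2}$, reduces to bounding $\|vA\|_{\ell^2}$, applies the spectral theorem to the symmetric matrix $A$, and invokes Mitra's entrywise estimate on the Perron eigenvector $\phi$ of $A$ to control $\langle \phi,v\rangle$. Your approach via $B = D^{-1/2}AD^{-1/2}$ sidesteps Mitra entirely: the Perron eigenvector of $B$ is explicit, namely $D^{1/2}\mathbf{1}/\|D^{1/2}\mathbf{1}\|$, and orthogonality to it is automatic for $u=vD^{-1/2}$ when $\sum_i v_i=0$. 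Both arguments ultimately rest on the same F\"uredi--Koml\'os type spectral gap for $A$; what yours buys is the elimination of one external eigenvector ingredient, at the price of tracking the $D^{\pm 1/2}$ conjugation carefully --- which is exactly where your sign slip occurred.
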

\begin{proof} We use two different ingredients. The first is to write the diagonal matrix $D^{-1}$ as the sum of two diagonal matrices $D^{-1} = D_1 + D_2$ using
$$ \frac{1}{\deg(v)} = \frac{1}{np} + \left( \frac{1}{\deg(v)} - \frac{1}{np} \right).$$
 $D_1$ is a matrix with entries $1/(np)$ on the diagonal and $D_2 = D^{-1} - D_1$. Note that $D_1$ is a multiple of the identity and $\|D_1\| = 1/(np)$.
Since
$$ \deg(v) = np \pm \mathcal{O}(\sqrt{n \log{n}}),$$
we have
$$ \left|  \frac{1}{\deg(v)} - \frac{1}{np}  \right| \lesssim \frac{\sqrt{\log{n}}}{n^{3/2}} \qquad \mbox{and thus} \qquad \|D_2\| \lesssim  \frac{\sqrt{\log{n}}}{n^{3/2}}.$$
The second ingredient are the spectral properties of $A$. $A$ has one eigenvalue at scale $\lambda_1 \sim np$ while the second largest eigenvalue of $A$ satisfies $|\lambda_2(A)| \lesssim_p \sqrt{n}$ with probability tending to 1 (see  F\"uredi-Komlos \cite{fur}). This allows us to write, using that $D_1$ is a multiple of the identity matrix and thus commutes with all other matrices,
\begin{align*}
 \| vD^{-1}A \|&= \| v(D_1 + D_2)A v\| \\
 &\leq \| vD_1 A\| + \| v D_2 A\| \leq \| vD_1 A \| + \| A\| \| D_2\| \|v\|  \\
 &\lesssim \frac{\|vA \|}{n} +  \frac{\sqrt{\log{n}}}{n^{3/2}} \|A\| \|v\| \leq \frac{\|vA \|}{n} +  \frac{\sqrt{\log{n}}}{\sqrt{n}}  \|v\|.
 \end{align*}
 It remains to analyze the first term. Since $A$ is symmetric, the spectral theorem applies. Using $\phi$ to denote the $\ell^2-$normalized eigenvector of $A$ associated to the eigenvalue $\lambda_1$ (and $\lambda_2$ the second largest eigenvalue in absolute value), we have
 $$ \|vA \|^2 \leq \lambda_1(A)^2 \left\langle \phi, v \right\rangle^2 + \lambda_2(A)^2 \|v\|^2.$$
 Now we use a result of Mitra \cite{mitra} telling us that the eigenvector associated to the adjacency matrix of an Erd\H{o}s-Renyi random graph $G(n,p)$ and $0 < p < 1$ fixed is nearly constant and
 $$ \max_{1 \leq i \leq n} \left| \phi_i - \frac{1}{\sqrt{n}} \right| \leq c_p \frac{\sqrt{\log{n}}}{n}.$$
 This allows us to write
 \begin{align*}
  \left\langle \phi, v \right\rangle = \sum_{i=1}^{n} \phi_i v_i =  \sum_{i=1}^{n} \frac{1}{\sqrt{n}} v_i +   \sum_{i=1}^{n}  \left(\phi_i - \frac{1}{\sqrt{n}}\right) v_i.
 \end{align*}
 The first sum vanishes because $v$, by assumption, has mean value 0. Using the Cauchy-Schwarz inequality 
  $$  \left\langle \phi, v \right\rangle^2 = \left(\sum_{i=1}^{n}  \left(\phi_i - \frac{1}{\sqrt{n}}\right) v_i\right)^2 \lesssim_p \frac{\log{n}}{n} \cdot \|v\|_{\ell^2}^2.$$
 Therefore
 \begin{align*}
 \frac{\|vA \|_{\ell^2}}{n} &\lesssim \frac{1}{n} \sqrt{ \lambda_1(A)^2 \left\langle \phi, v \right\rangle^2 + \lambda_2(A)^2 \|v\|^2}\\
&\lesssim \frac{1}{n} \sqrt{ n^2 \left\langle \phi, v \right\rangle^2 + n\|v\|^2} \\
 &\lesssim \frac{1}{n}\sqrt{n^2 \frac{\log{n}}{n}\|v\|^2 + n \|v\|^2} \lesssim \frac{\sqrt{\log{n}}}{\sqrt{n}} \|v\|_{\ell^2}.
 \end{align*}
 This establishes the desired result.
 \end{proof}

\begin{proof}[Proof of the Proposition]
$\mu_1$ is easy to describe, it assumes values $1/\deg(v)$ in the neighbors of $v$ and value 0 everywhere else. Thus
$$\| \mu_1 - \pi\|_{\ell^2} \leq \| \mu_1\|_{\ell^2} + \| \pi \|_{\ell^2} \lesssim \frac{1}{\sqrt{n}} + \| \pi \|_{\ell^2} \lesssim \frac{1}{\sqrt{n}}.$$
We can now argue via induction. Suppose the desired statement holds for $\mu_k$. Then, using the fact that $\pi D^{-1} A = \pi$, we have
$$\mu_{k+1} - \pi = \mu_{k}D^{-1}A - \pi = \left( \mu_{k} - \pi \right)D^{-1}A.$$
We observe that $\mu_k$ and $\pi$ are probability measures, all the arising vectors always have mean value 0. Then
$$ \| \mu_{k+1} - \pi \|_{\ell^2} =   \| \left( \mu_{k}D^{-1}A - \pi \right) \|_{\ell^2}.$$
Since both $\mu_k$ and $\pi$ are probability distributions, we have that $\mu_k - \pi$ has mean value 0 and Lemma \ref{lem:tech} applies to give
$$   \| \left( \mu_{k} - \pi \right)D^{-1}A\|_{\ell^2} \lesssim c_p \frac{\sqrt{\log{n}}}{\sqrt{n}} \|\mu_{k} - \pi \|_{\ell^2}$$
from which the result follows.
\end{proof}

\textbf{Acknowledgment.} The authors are grateful for discussions with Karel Devriendt and grateful to Felix Joos, Jonathan Schrodt and Tom Stalljohann for spotting a gap in a previous version of the manuscript.

\end{document}